 \newtheorem{thm}{Theorem}[section]
 \newtheorem{cor}[thm]{Corollary}
 \newtheorem{lem}[thm]{Lemma}
 \newtheorem{prop}[thm]{Proposition}
 \theoremstyle{definition}
 \newtheorem{defn}[thm]{Definition}
 \theoremstyle{remark}
 \newtheorem{rem}[thm]{Remark}
 \theoremstyle{definition}
 \newtheorem{exm}{Example}[section]
 \DeclareMathOperator{\prev}{prev}
  \DeclareMathOperator{\new}{new}
 \def\Blem{\begin{lem}}
 \def\Elem{\end{lem}}
 \newcommand{\uccomment}[1]{}
 \def\Bpr{\begin{prop}}
 \def\Epr{\end{prop}}
\newcounter{cCounter}
 \def\Bp{\begin{proof}}
 \def\Ep{\end{proof}}
 \def\Bex{\begin{exm}}
 \def\Eex{\end{exm}}
 \def\Bcor{\begin{cor}}
 \def\Ecor{\end{cor}}
 \def\Br{\begin{rem}}
 \def\Er{\end{rem}}
 \def\Bthm{\begin{thm}}
 \def\Ethm{\end{thm}}
 \def\Bd{\begin{defn}}
 \def\Ed{\end{defn}}
 \def\Beq{\begin{equation}}
 \def\Eeq{\end{equation}}
 \DeclareMathOperator{\st}{subject\; to}
 \DeclareMathOperator{\minimize}{minimize}
 \def\RR{\mathbb{R}}
\def\bone{{\boldsymbol 1}}
\journal{...}
\begin{document}

\begin{frontmatter}

\title{A Two-Phase Method for Solving Continuous Rank-One Quadratic Knapsack Problems}

\author[mathalz]{Sayyed Ehsan Monabbati }
\ead{e.monabbati@alzahra.ac.ir}


\address[mathalz]{Department of Mathematics, Faculty of Mathematical Sciences, Alzahra University, Tehran, Iran}
\cortext[cor1]{Corresponding author, e.monabbati@alzahra.ac.ir, Tel: +98-21-85693001}

\begin{abstract}
In this paper, we propose a two-phase algorithm for solving continuous rank-one quadratic knapsack problems (R1QKP).
In particular, we study the solution structure of the problem without the knapsack constraint.
We propose an $O(n\log n)$ algorithm in this case.
We then use the solution structure to propose an $O(n^2\log n)$ algorithm that finds an interval containing the optimal value of the Lagrangian dual of R1QKP.
In the second phase, we solve the restricted Lagrangian dual problem using a traditional single-variable optimization method.
We perform a computational test on random instances and compare our algorithm with the general solver CPLEX.
\end{abstract}

\begin{keyword}
Quadratic Knapsack Problem \sep Line-Sweep Algorithm
\end{keyword}

\end{frontmatter}
\section{Introduction}\label{sec:intro}
The quadratic knapsack problem (QKP) deals with minimizing a quadratic function over one allocation constraint together with simple bounds on decision variables.
Formally, this problem can be written as to
\begin{subequations}\label{eqn:QKP-general}
\begin{align}
  \minimize \quad & \frac{1}{2} x^\top  Q x - c^\top x, \\
  \st \quad  & a^\top x = b, \label{eqn:QKP-gneral-eq}\\
  & 0 \leq x\leq u,
\end{align}
\end{subequations}
where $Q$ is a symmetric $n\times n$ matrix, $a,c,l,u\in\RR^n$ and $b\in \RR$.
QKP as a quadratic optimization problem is polynomially solvable when $Q$ is positive definite matrix \cite{pardalos1991algorithms}.
When $Q$ is diagonal with strictly positive diagonal entries QKP can be viewed as a strictly convex separable optimization problem that has many applications (e.g. resource allocation \cite{patriksson2015algorithms, patriksson2008survey, bitran1981disaggregation} and multicommodity network flows \cite{helgason1980polynomially}).
The solution methods for solving this type of QKPs usually rely on the fact that the optimal solution of the Lagrangian dual subproblems can be explicitly obtained in terms of the Lagrange multiplier $\lambda$ of \eqref{eqn:QKP-gneral-eq}.
Therefore the problem reduces to find a value for $\lambda$ such that the solution of the corresponding Lagrangian subproblem is satisfied in equality constraint \eqref{eqn:QKP-gneral-eq}.
The resulting equation is solved by different methods.
Helgason et. al \cite{helgason1980polynomially} propose an $O(n\log n)$  algorithm for solving the equation based on searching breakpoints of the Lagrangian dual problem.
Brucker  \cite{brucker1984n} finds an $O(n)$ bisection algorithm based on the properties of the Lagrangian dual function.
Dai and Fletcher \cite{dai2006new} propose a two-phase method; A bracketing phase that determines an interval containing the solution followed by the secant phase that approximates the solution within the promising interval.
This method is modified by Comminetti  et. al \cite{cominetti2014newton} with ignoring the bracketing phase and using a semi-smooth Newton method instead of the secant method.
Liu and Yong-Jin \cite{liu2017fast} consider a special case of the strictly convex form of the problem. They find the solution structure of the subproblems and use it in a modified secant algorithm.

Robinson et. al  \cite{robinson1992continuous} use the geometric interpretation of the problem and propose an algorithm that works in the primal space rather than the dual space.
This algorithm iteratively fixes variables and terminates after at most $n$ iterations.

In a more general case, when $Q$ is positive semidefinite in \eqref{eqn:QKP-general},
Dussault et. al \cite{dussault1986convex} propose an iterative algorithm in which a QKP with diagonal $Q$ should be solved in each iteration.
Paradalos et. al \cite{pardalos1991algorithms} suggest a potential reduction algorithm to solve this class of QKP.
di Serafino et. al \cite{di2018two} propose a two-phase gradient projection that has acceptable numerical performance in comparison with similar gradient-based methods.

QKPs with positive definite $Q$ are also solved by a gradient projection method \cite{dai2006new}, and an augmented-Lagrangian approach \cite{fletcher2017augmented}.

In this paper, we suppose $Q$ is a rank one symmetric matrix, that is $Q=qq^\top $ for some $q\in \RR^n$.
Moreover, we assume that  $0 <u$.
Without loss of generality we assume that $q_i\neq 0$ for each $i$.
By the following change of variables
\[
\begin{aligned}
   x_i \leftarrow q_i x_i, \quad c_i \leftarrow \frac{c_i}{q_i}, \quad a_i \leftarrow \frac{a_i}{q_i}, \quad l_i \leftarrow \min\{0, q_i u_i\},\quad u_i \leftarrow \max\{0, q_i u_i\},
\end{aligned}
\]
problem \eqref{eqn:QKP-general} is reduced to
\begin{subequations}\label{eqn:r1-qkp-red}
\begin{align}
  \minimize \quad & \frac{1}{2} (\bone^\top x)^2 - c^\top x, \\
  \st \quad  & a^\top x = b, \label{eqn:r1-qkp-red-eq}\\
  & 0\leq x\leq u.
\end{align}
\end{subequations}

Sharkey et. al \cite{sharkey2011class} study a class of nonseparable nonlinear knapsack problems in which one has to
\begin{equation}\label{eqn:nonsep-KP}
\begin{aligned}
  \minimize \quad &  g(s^\top x) - c^\top x, \\
  \st \quad  & a^\top x = b, \\
  & l\leq x\leq u,
\end{aligned}
\end{equation}
where $g:\RR\to\RR$ is an arbitrary real-valued function, and $s\in \RR^n$ is given.
They introduce an algorithm for solving \eqref{eqn:nonsep-KP} that runs in $O(n^2 \max\{\log n,\phi\})$, where $\phi$ is the time required to solve a single-variable optimization problem $\min\{ g(S) - \alpha S \;:\; L\leq S\leq U\}$ for given $\alpha, L, U\in \RR$.
With $g(t) = t^2$ and $s$ equals to the all-one vector, problem \eqref{eqn:r1-qkp-red} is a special case of problem \eqref{eqn:nonsep-KP}. That is, there exists an $O(n^2 \max\{\log n,1\})=O(n^2 \log n)$ algorithm for solving problem \eqref{eqn:r1-qkp-red}.

In this paper, we propose a two-phase algorithm for problem \eqref{eqn:r1-qkp-red}.
In section~\ref{sec:bnd-ver} we study the solution structure of the bounded version of the problem in which the equality constraint \eqref{eqn:r1-qkp-red-eq} is omitted.
We show that the bounded version could be solved in $O(n\log n)$ time.
In section~\ref{sec:the-algorithm}, in phase I, we use the solution structure of the bounded version to find an interval that may contain the optimal value of the Lagrangian dual function.
This is done in $O(n^2\log n)$ time in the worst case.
Then, we perform phase II, in which we explore the interval by a single-variable optimization method to find the optimal Lagrangian multiplier with desired precision.
In section~\ref{sec:comp-test}, we perform a computational test.
In particular, we compare the algorithm with the general quadratic programming solver CPLEX.

%

\section{Solution structure of the bounded version}\label{sec:bnd-ver}
In this section we consider the following bounded version of the problem
\begin{subequations}\label{eqn:r1-QKP-unres}
\begin{align}
  \minimize \quad & f(x) = \frac{1}{2} (\bone^\top x)^2 - c^\top x, \\
  \st \quad  & 0\leq x\leq u.
\end{align}
\end{subequations}
We propose a characterization of the solution in the primal space.
Note that most of algorithms for such problems use the so-called KKT conditions to study the solution structure.
Without loss of generality, we  assume that $c_1 \geq c_2 \geq \cdots\geq c_n \geq 0$, and
$l_i = 0$, $i=1,\cdots,n$.
Given two vectors $a,b\in\RR^n$, we denote the set $\{x \;:\; a\leq x\leq b\}$ by $[a,b]$.
Finally, given vector $u\in \RR^n$ we define $U_k = \sum_{i=1}^k u_i$ for $k=1,\cdots, n$, and $U_0=0$.

We begin with   preliminary lemmas.
\begin{lem} \label{lem:pre-simple-integer}
For $k=1,\cdots,n$ define $x^{(k)}$ as
\begin{align*}
x_i^{(k)} =
\begin{cases}
 u_i,  & i=1,\dots,k, \\
 0, & i=k+1,\dots,n,
\end{cases}
\end{align*}
and $x^{(0)}$ as the all-zero vector, and, define $G_k$ as
\[
G_k =  \frac12\left( U_k + U_{k-1}\right)-c_{k} = U_{k-1} + \frac12 u_{k} - c_k.
\]
Then the following assertions hold
 \begin{enumerate}[(i)]
 \item\label{itm:pre_simple_integer_p1}
 If $\bar n$ is the smallest index  in  $ \{ 1,\cdots, n \}$ such that $G_{\bar n}\geq 0$
then $\min_{i=1,\cdots,n} f(x^{(i)})= f(x^{(\bar n-1)}) $.
\item\label{itm:pre-simple-integer-p2}
If $G_k<0$ for all $k= 1,\cdots, n-1$ then $\min_{i=1,\cdots,n} f(x^{(i)})=  f(x^{(n)})$.
\end{enumerate}
\end{lem}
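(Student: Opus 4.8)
The plan is to reduce everything to the sign of a single consecutive difference and then to exploit the monotonicity of the sequence $(G_k)$. First I would evaluate the objective at the test points: since $\bone^\top x^{(k)} = U_k$ and $c^\top x^{(k)} = \sum_{i=1}^k c_i u_i$, we have $f(x^{(k)}) = \tfrac12 U_k^2 - \sum_{i=1}^k c_i u_i$. Telescoping the square through $U_k^2 - U_{k-1}^2 = u_k(U_k + U_{k-1})$ then yields the clean identity
\[
f(x^{(k)}) - f(x^{(k-1)}) = u_k\left( \tfrac12 (U_k + U_{k-1}) - c_k \right) = u_k\, G_k .
\]
Because $u_k > 0$, the sign of each consecutive difference equals the sign of $G_k$, so the whole argument hinges on the sign pattern of $G_1, \dots, G_n$.

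The decisive step is to show that $(G_k)$ is nondecreasing. Using $U_k - U_{k-1} = u_k$ one computes
\[
G_{k+1} - G_k = \tfrac12 u_k + \tfrac12 u_{k+1} + (c_k - c_{k+1}),
\]
which is nonnegative precisely because $u_k, u_{k+1} > 0$ and the data are sorted so that $c_k \ge c_{k+1}$. This is exactly where the standing assumption $c_1 \ge \cdots \ge c_n$ enters, and it is the main (if short) obstacle: monotonicity of $(G_k)$ is what guarantees the differences $u_k G_k$ change sign at most once, from negative to nonnegative. Consequently the finite sequence $f(x^{(0)}), f(x^{(1)}), \dots, f(x^{(n)})$ is unimodal, strictly decreasing while $G_k < 0$ and nondecreasing once $G_k \ge 0$.

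With this in hand both assertions follow directly. For \eqref{itm:pre_simple_integer_p1}, let $\bar n$ be the first index with $G_{\bar n} \ge 0$; monotonicity forces $G_k < 0$ for all $k < \bar n$, so $f$ strictly decreases up to $x^{(\bar n - 1)}$, while $G_k \ge 0$ for $k \ge \bar n$ gives $f(x^{(k)}) \ge f(x^{(k-1)})$ thereafter. Hence the minimum over all test points is attained at $x^{(\bar n - 1)}$. For \eqref{itm:pre-simple-integer-p2}, when every $G_k$ is negative each consecutive difference is strictly negative, so $f$ strictly decreases along the entire chain and the minimum is $f(x^{(n)})$.

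Finally I would attend to the index bookkeeping at the endpoints, which is the only place that needs care: I would admit $x^{(0)}$ among the candidates so that $\bar n = 1$ correctly returns $f(x^{(0)})$, and I would read case \eqref{itm:pre-simple-integer-p2} as the complementary regime in which no index in $\{1,\dots,n\}$ makes $G_k$ nonnegative. The substance of the proof is otherwise just the one-line difference identity together with the monotonicity of $(G_k)$.
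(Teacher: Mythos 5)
Your proposal is correct and follows essentially the same route as the paper: the identity $f(x^{(k)})-f(x^{(k-1)})=u_kG_k$ combined with the (strict, since $u_i>0$) monotonicity of the sequence $(G_k)$, which makes the values $f(x^{(0)}),\dots,f(x^{(n)})$ unimodal. Your remark about admitting $x^{(0)}$ so that the case $\bar n=1$ is handled cleanly is a sensible bookkeeping point that the paper glosses over, but the substance of the two arguments is identical.
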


\begin{proof} \eqref{itm:pre_simple_integer_p1}
For $1\leq k\leq n-1$ we have
\begin{align*}
G_k - G_{k+1}&  = \frac12\left( U_k + U_{k-1}\right) - c_k -\frac12\left( U_{k+1} + U_k \right)+c_{k+1}
   \\
& = -\frac12\left(u_{k}+u_{k+1} \right) + (c_{k+1}-c_{k})\\
&  < 0\cdot
\end{align*}
Thus
\[
G_1 < G_2 < \cdots < G_{\bar n-1} < 0 \leq G_{\bar n} < G_{\bar n+1}  < \cdots < G_n.
\]
On the other hand, for $1\leq k \leq n-1$ we have
\begin{equation}\label{eqn:fxdiff}
\begin{aligned}
f(x^{(k+1)})-f(x^{(k)}) & = \frac12 U_{k+1}^2 - \sum_{i=1}^{k+1} u_i c_i - \frac12 U_k^2 +   \sum_{i=1}^{k} u_i c_i  \\
& =  \frac12 U_{k+1}^2 - u_{k+1}c_{k+1}  - \frac12 U_k^2  \\
& =  \frac12 (U_{k+1}^2- U_k^2) - u_{k+1}c_{k+1} \\
& =  \frac12 (U_{k+1}- U_k)(U_{k+1}+ U_k) - u_{k+1}c_{k+1}    \\
& = u_{k+1} \left( \frac12 (U_{k+1}+U_k) - c_{k+1} \right) \\
& = u_{k+1} G_{k+1}.
\end{aligned}
\end{equation}
Now let $m>\bar n-1$. Then
\[
\begin{aligned}
  f(x^{(m)}) - f(x^{(\bar n-1)}) & =  f(x^{(m)})-f(x^{(m-1)}) + f(x^{(m-1)}) + \cdots   + f(x^{(\bar n)}) - f(x^{(\bar n-1)}) \\
  & = u_mG_m + \cdots + u_{\bar n} G_{\bar n} > G_{\bar n}(u_m + \cdots + u_{\bar n +1}) \\
  & > 0.
\end{aligned}
\]
Similarly, if $m<\bar n-1$, then we have $f(x^{(m)}) - f(x^{(\bar n-1)}) >0$.

\eqref{itm:pre-simple-integer-p2}
The second part can be easily proved considering \eqref{eqn:fxdiff}.
\end{proof}

We need the following result about two-dimensional version of problem \eqref{eqn:r1-QKP-unres}.
\begin{lem}\label{lem:2d-version}
  Consider the following optimization problem
    \begin{equation}\label{eqn:2d-version}
     \begin{aligned}
     \minimize \quad & f(x_1,x_2) = \frac12 (x_1+x_2)^2 - c_1 x_1 - c_2 x_2, \\
     \st \quad & 0 \leq x_1 \leq u_1, \\
     & 0\leq x_2 \leq u_2,
     \end{aligned}
    \end{equation}
  where $c_1\geq c_2 \geq 0$ and $u_1$ and $u_2$ are real constants.
  Define  set $I=I_1\cup I_2$ where $I_1=\{ (u_1,x_2) \;:\; 0\leq x_2\leq u_2 \}$ and $I_2 = \{ (x_1,0)\;:\; 0\leq x_1\leq u_1\}$. Then, problem \eqref{eqn:2d-version} has no optimal solution outside of $I$.

\end{lem}

\begin{proof}
  If $c_1 = c_2$, then $f(x_1,x_2) =  \frac12 (x_1+x_2)^2 - c_1( x_1 + x_2)=\frac12 z^2 - c_1 z = g(z)$ where $z=x_1+x_2$.
  It is easy to see that $x^\ast = (x_1^\ast,x_2^\ast)$ with $x_1^\ast = \min\{c_1, u_1\}$ and $x_2^\ast = \min\{ c_1-x_1^\ast, u_2\}$ is the optimal solution of the problem, and we have $x^\ast \in I$.
  Assume that $c_1\neq c_2$.
  The feasible region of \eqref{eqn:2d-version} is equal to  $I_1\cup I_2\cup I_3\cup I_4$ where $I_3=\{(x_1,x_2)\;:\; 0<x_1<u_1, 0<x_2<u_2\}$ and $I_4=\{ (0,x_2)\;:\; 0<x_2<u_2\}\cup \{ (x_1,u_2)\;:\; 0<x_1<u_1\}$. We show that there is no optimal solution in $I_3$ and $I_4$.
  Indeed, we write the KKT optimality conditions as follows
  \begin{align}
    & x_1 + x_2 - c_1 + \alpha_1 - \alpha_2 = 0, \label{eqn:kkt42d-1}\\
    & x_1 + x_2 - c_2 + \beta_1 - \beta_2 = 0, \label{eqn:kkt42d-2}\\
    & \alpha_1 (x_1-u_1)=0, \quad \alpha_2 x_1 = 0, \label{eqn:kkt42d-3}\\
    & \beta_1 (x_2-u_2)=0, \quad \beta_2 x_2 = 0, \label{eqn:kkt42d-4}\\
    & 0\leq x_1 \leq u_1, \\
    & 0 \leq x_2 \leq u_2, \\
    & \alpha_1, \alpha_2, \beta_1, \beta_2 \geq 0,
  \end{align}
  where $\alpha_i$ and $\beta_i$, $i=1,2$ are KKT multipliers corresponding to the bound constraints.
  If $(x_1,x_2)\in I_3$ then from \eqref{eqn:kkt42d-3} and \eqref{eqn:kkt42d-4} we  have $\alpha_1 = \alpha_2 = \beta_1 = \beta_2 = 0$. Substituting these values in \eqref{eqn:kkt42d-1} and \eqref{eqn:kkt42d-2} implies that $c_1=c_2$ which contradicts our assumption.
  On the other hand, if $(x_1,x_2)\in I_4$ and  $x_1=0$, then we have $\alpha_1=0$.
  Now, \eqref{eqn:kkt42d-1} implies that $x_2=c_1+\alpha_2<0$.
  Thus $\beta_2=0$.
  From \eqref{eqn:kkt42d-2} we have $x_2 = c_2 - \beta_1$.
  Therefore, $c_2 = c_1 + \alpha_2 + \beta_1 \geq c_1$.
  This contradicts our assumption on $c_i$s.
  That is, problem \eqref{eqn:2d-version} has no optimal solution with $x_1=0$.
  By similar argument one can conclude that \eqref{eqn:2d-version} has no solution with $x_2=u_2$. This completes the proof.
\end{proof}

\begin{thm} \label{lem:pre2-simple-integer}
Suppose $x^{(k)}$ and $G_k$, $k=1,\cdots,n$ and  $\bar n$ are defined as in Lemma~\ref{lem:pre-simple-integer}.
Then the following assertions hold
\begin{enumerate}[(i)]
\item \label{lem:pre2-simple-integer-p1}
    If $\bar n>1$, then define $\delta_1,\delta_2$ as $\delta_1 =\min\left\{ c_{\bar n-1}-U_{\bar n-2}, u_{\bar n -1}\right\}$ and $\delta_2  =\max\left\{  c_{\bar n}-U_{\bar n -1}, 0 \right\}$.
      Also, define $\bar x$, $\tilde x$ as
     \[
     \bar x = x^{(\bar n-2)} + \delta_1 e_{\bar n-1}, \quad  \tilde x = x^{(\bar n-1)} + \delta_2 e_{\bar n },
     \]
      where  $e_i$ is the $i$th column of the identity matrix of dimension $n$.
     Then $\min\{ f(\bar x), f(\tilde x)\}$ is the optimal value of the following optimization problem
    \begin{equation}\label{eqn:restricted-simple-integer}
     \begin{aligned}
     \minimize \quad & f(x), \\
     \st \quad & x^{(\bar n-2)} \leq x \leq x^{(\bar n)}.
     \end{aligned}
    \end{equation}
\item\label{lem:pre2-simple-integer-p2}
    If $\bar n=1$, then define
    $\delta = \min\{c_{1},u_1\}$.
     Also define  $\tilde x  = \delta e_{1}$.
     Then $f(\tilde x)$ is the optimal value of the following optimization problem
    \begin{equation*}
     \begin{aligned}
     \minimize \quad & f(x), \\
      \st \quad & x^{(0)} \leq x \leq x^{(1)}.
     \end{aligned}
    \end{equation*}
\item\label{lem:pre2-simple-integer-p3}
    if $G_k<0$ for all $k=1,\cdots,n$, then define
    $\delta = \min\{c_n-U_{n-1},u_n\}$.
    Also define $\tilde x = x^{(n-1)} + \delta e_n$.
     Then $f(\tilde x)$ is the optimal value of the following optimization problem
    \begin{equation*}
     \begin{aligned}
     \minimize \quad & f(x), \\
      \st \quad & x^{(n-1)} \leq x \leq x^{(n)}.
     \end{aligned}
    \end{equation*}
\end{enumerate}

\end{thm}
\begin{proof}
(\ref{lem:pre2-simple-integer-p1})
 By Lemma~\ref{lem:2d-version} we can partition the optimal solution set as  $I_1\cup I_2$ where $I_1=[x^{(\bar n-2)},x^{(\bar n-1)}]$ and $I_2=  [x^{(\bar n-1)},x^{(\bar n)}]$.
 We show that $f(\bar x) = \min\{ f(x): x\in I_1\}$ and $f(\tilde x) = \min\{ f(x) : x\in I_2 \}$.
Indeed, we use a simple technique of single-variable calculus.
Let $x\in I_1$, then $x=x(\delta)$,
for some $\delta\in[0,u_{\bar n-1}]$, where $x(\delta) = x^{(\bar n-2)} + \delta e_{\bar n-1}$.
Thus the problem $\min\{ f(x): x\in I_1\}$  reduces to $\min\{ f(x(\delta)): 0\leq \delta\leq u_{\bar n-1}\}$.
On the other hand, one can write
\[
f(x(\delta)) = \frac12 \left( U_{\bar n-2}  + \delta \right)^2 - \sum_{i=1}^{\bar n -2} c_i u_i - c_{\bar n-1} \delta.
\]
We have $df(x(\delta))/d\delta =U_{\bar n-2}  + \delta - c_{\bar n-1}$.
Thus $df(x(\delta))/d\delta=0$ only if $\delta=\delta' = c_{\bar n-1}- U_{\bar n-2}$.
Since $d^2f(x(\delta))/d\delta^2 >0$ and $ \delta'>\frac12 u_{\bar n-1}$ the optimal value is achieved at $\delta_1$.

To prove $f(\tilde x) = \min\{ f(x) : x\in I_2 \}$, by the same argument as the previous paragraph,
it suffices to solve single optimization problem $\min\{ f(x(\delta)): 0\leq \delta\leq u_{\bar n}\}$,
where $x(\delta) = x^{(\bar n-1)} + \delta e_{\bar n}$. It is easy to see that if
$\delta=\delta' = c_{\bar n}- U_{\bar n-1}$ then $df(x(\delta))/d\delta=0$.
Since  $\delta'\leq \frac12 u_{\bar n}$, by definition of $\bar n$, then $f(\tilde x)$ is the optimal value of $\min\{ f(x) : x\in I_2 \}$.

The proof of parts \eqref{lem:pre2-simple-integer-p2} and  \eqref{lem:pre2-simple-integer-p3} is similar.
\end{proof}

The following Corollary~\ref{cor:optimality-special-case} states simple conditions under which the optimal solution of the problem in Theorem~\ref{lem:pre2-simple-integer}(\ref{lem:pre2-simple-integer-p1}) is $\bar x$ or $\tilde x$.
\begin{cor} \label{cor:optimality-special-case}
  In Theorem~\ref{lem:pre2-simple-integer}(\ref{lem:pre2-simple-integer-p1}),
  \begin{enumerate}[(i)]
    \item if $\delta_1 = u_{\bar n-1}$ then $\min\{ f(\bar x), f(\tilde x)\}=f(\tilde x)$, and, \label{cor:cor-of-main-lem-1}
    \item if $\delta_2 = 0$ then $\min\{ f(\bar x), f(\tilde x)\}=f(\bar x)$. \label{cor:cor-of-main-lem-2}
  \end{enumerate}
\end{cor}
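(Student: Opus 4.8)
The plan is to read both claims directly off the interval decomposition already established in the proof of Theorem~\ref{lem:pre2-simple-integer}. Recall from that proof that $\bar x$ is the minimizer of $f$ over $I_1 = [x^{(\bar n-2)}, x^{(\bar n-1)}]$ and that $\tilde x$ is the minimizer of $f$ over $I_2 = [x^{(\bar n-1)}, x^{(\bar n)}]$, and that these two boxes meet exactly at the common vertex $x^{(\bar n-1)}$. The whole corollary then reduces to recognizing that, under each hypothesis, one of $\bar x, \tilde x$ collapses onto this shared vertex, so that it becomes an admissible competitor in the \emph{other} box's minimization.

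For part (\ref{cor:cor-of-main-lem-1}), I would first observe that $\delta_1 = u_{\bar n-1}$ forces $\bar x = x^{(\bar n-2)} + u_{\bar n-1} e_{\bar n-1} = x^{(\bar n-1)}$. Thus $\bar x$ is precisely the lower endpoint of $I_2$, hence $\bar x \in I_2$. Since $\tilde x$ minimizes $f$ over $I_2$ by Theorem~\ref{lem:pre2-simple-integer}, we get $f(\tilde x) \leq f(\bar x)$, and therefore $\min\{f(\bar x), f(\tilde x)\} = f(\tilde x)$.

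For part (\ref{cor:cor-of-main-lem-2}), symmetrically, I would note that $\delta_2 = 0$ forces $\tilde x = x^{(\bar n-1)} + 0\cdot e_{\bar n} = x^{(\bar n-1)}$, which is the upper endpoint of $I_1$, so $\tilde x \in I_1$. Because $\bar x$ minimizes $f$ over $I_1$, we obtain $f(\bar x) \leq f(\tilde x)$, whence $\min\{f(\bar x), f(\tilde x)\} = f(\bar x)$.

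There is essentially no hard step here: the content lies entirely in the bookkeeping identities $\bar x = x^{(\bar n-1)}$ and $\tilde x = x^{(\bar n-1)}$ produced by the two hypotheses, combined with the already-proved fact that $\bar x$ and $\tilde x$ are the box-wise minimizers over $I_1$ and $I_2$. The only point worth double-checking is that the collapsed point genuinely lies in the other box, which is immediate since $x^{(\bar n-1)}$ is exactly the shared vertex $I_1 \cap I_2$.
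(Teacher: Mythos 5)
Your proof is correct, but it takes a genuinely different route from the paper's. The paper argues by direct computation: under the hypothesis of part (\ref{cor:cor-of-main-lem-1}) it expands $f(\bar x)-f(\tilde x)$ and simplifies it to $\frac12\left(U_{\bar n-1}-c_{\bar n}\right)^2\geq 0$, which settles the claim and, as a bonus, gives the exact size of the gap between the two candidate values. You instead argue structurally: the hypothesis $\delta_1=u_{\bar n-1}$ (resp.\ $\delta_2=0$) collapses $\bar x$ (resp.\ $\tilde x$) onto the shared vertex $x^{(\bar n-1)}=I_1\cap I_2$, which then competes in the other box's minimization, so the other candidate wins by the interval-wise optimality of $\bar x$ over $I_1$ and $\tilde x$ over $I_2$. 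Your version is cleaner and computation-free; its only cost is that it leans on the statement ``$\bar x$ minimizes $f$ over $I_1$ and $\tilde x$ minimizes $f$ over $I_2$,'' which is established inside the \emph{proof} of Theorem~\ref{lem:pre2-simple-integer} rather than in its statement (the theorem only asserts that $\min\{f(\bar x),f(\tilde x)\}$ is the optimum over the union). Since the corollary immediately follows that theorem, citing its proof is acceptable, but a fully self-contained write-up should say so explicitly, as you did. Both arguments are valid; the paper's yields the quantitative identity $f(\bar x)-f(\tilde x)=\frac12\left(U_{\bar n-1}-c_{\bar n}\right)^2$, while yours generalizes more readily to any situation where two adjacent boxes share a vertex.
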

\begin{proof}
For brevity, we just prove part (\ref{cor:cor-of-main-lem-1}). The proof of the second part is similar. Under the assumption of part (\ref{cor:cor-of-main-lem-1}), we have
\[
f(\bar x) - f(\tilde x) = \frac12 U_{\bar n-1}^2 - \sum_{i=1}^{\bar n-1}c_i u_i - \frac12 c_{\bar n}^2 + \sum_{i=1}^{\bar n-1} c_i u_i + c_{\bar n} (c_{\bar n} - U_{\bar n-1} ) = \frac12 \left( U_{\bar n-1} - c_{\bar n} \right)^2\geq 0.
\]
\end{proof}

Theorem~\ref{lem:pre2-simple-integer} solves a restricted version of  problem \eqref{eqn:r1-qkp-red}.
In the following theorem, we show that the solution of the restricted version is the solution of the original problem.

\begin{thm}\label{thm:main-result}
Define $G_k$'s, $x^{(k)}$'s,  $\bar n$, $\bar x$ and $\tilde x$ as in Theorem~\ref{lem:pre2-simple-integer}. Then, the following assertions  hold
\begin{enumerate}[(i)]
\item\label{thm:main-result-p1}
If $1<\bar n \leq n$, then $\min\{ f(\bar x), f(\tilde x)\}$ is the optimal value of \eqref{eqn:r1-QKP-unres}, where $\tilde x$ and $\bar x$ are defined as in Theorem~\ref{lem:pre2-simple-integer}(\ref{lem:pre2-simple-integer-p1}).
\item\label{thm:main-result-p2}
If $\bar n=1$, then $f(\tilde x)$ is the optimal value of \eqref{eqn:r1-QKP-unres}, where $\tilde x$ is defined as in Theorem~\ref{lem:pre2-simple-integer}(\ref{lem:pre2-simple-integer-p2}).
\item\label{thm:main-result-p3}
  If $G_k<0$ for all $k=1,\cdots, n$, then $f(\tilde x)$ is the optimal solution of \eqref{eqn:r1-QKP-unres}, where  $\tilde x = x^{(n-1)} + \delta' e_n$, and $\delta' = \min\{c_n - U_{n-1},u_n \}$.
\end{enumerate}
\end{thm}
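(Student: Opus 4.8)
The plan is to reduce the $n$-dimensional box problem \eqref{eqn:r1-QKP-unres} to the two-coordinate problem already solved in Theorem~\ref{lem:pre2-simple-integer}, using convexity together with the ordering $c_1\ge\cdots\ge c_n\ge 0$. First I would note that $f$ is convex, since $(\bone^\top x)^2$ is a convex quadratic and $-c^\top x$ is linear. The value $f(x)$ depends on $x$ only through the scalar $s=\bone^\top x$ and the term $c^\top x$, so for a fixed sum $s$ minimizing $f$ is the same as maximizing $c^\top x$ over $\{0\le x\le u,\ \bone^\top x=s\}$. Because the $c_i$ are nonincreasing, this inner linear program is solved greedily by filling the coordinates in index order, which produces exactly one of the ``staircase'' points on the polygonal path $x^{(0)}\to x^{(1)}\to\cdots\to x^{(n)}$. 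Hence the minimum of $f$ over the whole box is attained on this path, and it suffices to minimize $f$ along it.

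Next I would parametrize the path by $s$ and analyze the resulting single-variable function $h(s)$. On the $k$-th segment $[x^{(k-1)},x^{(k)}]$, where $s$ ranges over $[U_{k-1},U_k]$, one has $h(s)=\frac12 s^2-\big(\sum_{i<k}c_iu_i+c_k(s-U_{k-1})\big)$, so $h'(s)=s-c_k$. Within a segment the slope increases with $s$, and across the breakpoint $s=U_k$ the slope jumps from $U_k-c_k$ to $U_k-c_{k+1}\ge U_k-c_k$ since $c_{k+1}\le c_k$; thus $h'$ is nondecreasing and $h$ is convex. The grid values $h(U_k)$ are precisely $f(x^{(k)})$, so Lemma~\ref{lem:pre-simple-integer} identifies where the minimum over the breakpoints sits: at $U_{\bar n-1}$ in the generic case, at $U_0=0$ when $\bar n=1$, and at $U_n$ when every $G_k<0$.

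The crux is a localization step: for a convex $h$ whose minimum over the ordered breakpoints is attained at $U_{\bar n-1}$, the continuous minimizer $s^*$ must lie in $[U_{\bar n-2},U_{\bar n}]$. Indeed, $h(U_{\bar n-1})\le h(U_{\bar n-2})$ forces $s^*\ge U_{\bar n-2}$, for otherwise $h$ would be strictly increasing on $[U_{\bar n-2},U_{\bar n-1}]$ (its slope is $1$ inside each segment), contradicting that inequality; symmetrically $h(U_{\bar n-1})\le h(U_{\bar n})$ forces $s^*\le U_{\bar n}$. Therefore the path-minimizer lies in the two segments forming the restricted box $[x^{(\bar n-2)},x^{(\bar n)}]$, so the global optimum equals the optimum of \eqref{eqn:restricted-simple-integer}, which Theorem~\ref{lem:pre2-simple-integer}(\ref{lem:pre2-simple-integer-p1}) evaluates as $\min\{f(\bar x),f(\tilde x)\}$; this establishes part (\ref{thm:main-result-p1}). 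Part (\ref{thm:main-result-p2}) is the same argument with the left endpoint clamped at $U_0=0$, reducing to the single segment $[x^{(0)},x^{(1)}]$ and Theorem~\ref{lem:pre2-simple-integer}(\ref{lem:pre2-simple-integer-p2}). For part (\ref{thm:main-result-p3}), Lemma~\ref{lem:pre-simple-integer}(\ref{itm:pre-simple-integer-p2}) places the breakpoint-minimum at $U_n$, so $s^*\in[U_{n-1},U_n]$; minimizing $h$ on this last segment, where $h'(s)=s-c_n$ vanishes at $s=c_n$ and $G_n<0$ guarantees $c_n-U_{n-1}>\tfrac12 u_n>0$, yields the minimizer $x^{(n-1)}+\delta'e_n$ with $\delta'=\min\{c_n-U_{n-1},u_n\}$.

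I expect the main obstacle to be the reduction to the staircase path: this is the one place where the argument is genuinely combinatorial rather than a convexity computation, and it is where the hypothesis $c_1\ge\cdots\ge c_n$ is used in an essential way. Some additional care is needed at the boundary cases $\bar n=1$ and ``all $G_k<0$'', where the localization interval is one-sided and the two-segment box of Theorem~\ref{lem:pre2-simple-integer} degenerates to a single segment.
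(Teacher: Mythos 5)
Your argument is correct, and its first half is exactly the paper's: the paper also replaces an arbitrary feasible $x$ by the greedy ``staircase'' point $x'$ with the same sum $\bone^\top x'=\bone^\top x$ and verifies $c^\top x'\ge c^\top x$ by the monotonicity of the $c_i$, which is precisely your fractional-knapsack reduction to the path $x^{(0)}\to\cdots\to x^{(n)}$. Where you diverge is the localization step. The paper stays combinatorial: it splits into the cases $k\ge\bar n$ and $k<\bar n-2$ and shows by direct expansion of $f(x^{(k)})-f(x')$ (resp.\ $f(x^{(k+1)})-f(x')$) that the sign of the relevant $G_{\bar n}$ or $G_{\bar n-1}$ makes a breakpoint $x^{(i)}$ at least as good as $x'$, and then invokes Lemma~\ref{lem:pre-simple-integer} and Theorem~\ref{lem:pre2-simple-integer}. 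You instead observe that the restriction $h(s)$ of $f$ to the path is convex in $s$ (slope $s-c_k$ on the $k$-th segment, jumping upward at breakpoints because $c_{k+1}\le c_k$), so the one-dimensional minimizer must sit in the two segments adjacent to the best breakpoint $U_{\bar n-1}$. Your route is shorter, makes the role of the sorting hypothesis transparent (it is what makes $h$ convex), and uniformly covers parts (\ref{thm:main-result-p2}) and (\ref{thm:main-result-p3}), which the paper leaves implicit; it even re-derives the monotonicity pattern of Lemma~\ref{lem:pre-simple-integer} as a byproduct. The paper's computation, by contrast, exhibits the explicit quantitative gaps $f(x^{(k)})-f(x')$ in terms of the $G_k$, which is what gets reused later when $\phi(\lambda)$ is assembled piecewise. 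One small point of care: in your localization you should argue that $s^*<U_{\bar n-2}$ would force $h$ to be nondecreasing on $[s^*,\infty)$ and hence $h(U_{\bar n-2})\le h(U_{\bar n-1})$, contradicting the strict decrease $f(x^{(\bar n-2)})>f(x^{(\bar n-1)})$ coming from $G_{\bar n-1}<0$ (rather than asserting $h$ is ``strictly increasing'' there); this is a wording fix, not a gap.
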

\begin{proof}
For two vectors $x,z\in \RR^n$ we have
\begin{equation}\label{eqn:difff-f}
\begin{aligned}
f(z)-f(x) & =  \frac12 ( \bone^\top z + \bone^\top x)( \bone^\top z - \bone^\top x) - c^\top(z-x).
\end{aligned}
\end{equation}
Let $x$ be a feasible solution of \eqref{eqn:r1-QKP-unres}.
If $x=u=x^{(n)}$, then from the definition of $\bar n$ we have  $f(x^{\bar n-1}) \leq f(x)$, and the result follows from Theorem~\ref{lem:pre2-simple-integer}.
Suppose $x\neq u$.
 We show there exists a specially structured feasible solution $x'$ that is better than $x$.
Indeed, let $k$ be such that
\[
U_k \leq \bone^\top x < U_{k+1}.
\]
 Define vector $x'$ by
\[
x'_i =
\begin{cases}
u_i, & i=1,\cdots,k, \\
\bone^\top x  - U_k, & i = k+1,\\
0, & i=k+2,\cdots,n.
\end{cases}
\]
Then, clearly $x'$ is feasible for \eqref{eqn:r1-QKP-unres} and
\begin{align*}
\bone^\top x'  =  \sum_{i=1}^k x'_i + x'_{k+1} + \sum_{i=k+2}^n x'_i
   = \sum_{i=1}^k u_i + \sum_{i=1}^n x_i  - \sum_{i=1}^k u_i =\bone^\top x.
\end{align*}
Moreover we have
\begin{align*}
c^\top x'  & = \sum_{i=1}^k u_ic_i + c_{k+1} x'_{k+1} = \sum_{i=1}^k u_ic_i +
 c_{k+1}\sum_{i=1}^n x_i  - c_{k+1}\sum_{i=1}^k u_i \\
 &  = \sum_{i=1}^k u_ic_i +  c_{k+1}\sum_{i=1}^k x_i + c_{k+1}\sum_{i=k+1}^n x_i - c_{k+1}\sum_{i=1}^k u_i \\
 & \geq \sum_{i=1}^k u_ic_i +  \sum_{i=1}^k (x_i-u_i)c_i + \sum_{i=k+1}^n x_ic_i  \tag{by monotonicity of $c_i$'s}\\
 & = c^\top x.
\end{align*}
Therefor, \eqref{eqn:difff-f} implies that $f(x') - f(x) = -c^\top(x'-x) \leq 0$, i.e. $f(x')\leq f(x)$.

\eqref{thm:main-result-p1}
Now,  we consider three cases for index $k$ introduced in the definition of $x'$: $k\geq \bar n$, $k< \bar n -2$  and  $k=\bar n-1, \bar n-2$.
In the latter case, we have $x^{(\bar n-2)}\leq x'\leq x^{(\bar n)}$, so the assertion is true by
Theorem~\ref{lem:pre2-simple-integer}, since
\[
\min\{f(\bar x),f(\tilde x)\}  = \min\{ f(x) \;:\; x\in [x^{(\bar n-2)}, x^{(\bar n)}] \}\leq f(x') \leq f(x).
\]
We show in both the other  cases there is a point in the set  $\{x^{(i)}\}_{i=1,\cdots,n}$ better than $x'$, that is $f(x^{(i)})\leq f(x')$ for some $i=1,\cdots,n$.
By  Lemma~\ref{lem:pre-simple-integer}, $f(x^{(\bar n-1)})\leq f(x^{(i)})$ and the result follows by
 Theorem~\ref{lem:pre2-simple-integer}.

First, let $k \geq \bar n$. Then we have
\begin{align*}
f(x^{(k)}) - f(x') & = \frac12 ( \bone^\top x^{(k)} - \bone^\top x')( \bone^\top x^{(k)} + \bone^\top x') - c^\top(x^{(k)}-x') \\
& = -\frac12  x'_{k+1}  (2U_k+ x'_{k+1}) + c_{k+1}x'_{k+1} \\
& = -x'_{k+1} \left( \frac12 (2U_k+ x'_{k+1}) - c_{k+1} \right).
\end{align*}
On the other hand, we have
\[
2U_k+ x'_{k+1} = 2\sum_{i=1}^{\bar n-1} u_i +  \sum_{i=\bar n}^{k} u_i + x'_{k+1} \geq U_{\bar n} + U_{\bar n-1}.
\]
Therefor
\[
\frac12 (2U_k+ x'_{k+1}) - c_{k+1} \geq \frac12 (U_{\bar n} + U_{\bar n-1}) - c_{\bar n} = G_{\bar n} \geq 0.
\]
Thus $f(x^{(k)}) \leq f(x')$.

Now, let $k < \bar n-2$. Then
\begin{align*}
f(x^{(k+1)}) - f(x') & = \frac12 ( \bone^\top x^{(k+1)} - \bone^\top x')( \bone^\top x^{(k+1)} + \bone^\top x') - c^\top(x^{(k+1)}-x') \\
& = \frac12 ( u_{k+1} - x'_{k+1} )( U_{k+1} +  U_k + x'_{k+1} ) - c_{k+1}(u_{k+1}-x'_{k+1}) \\
& = (u_{k+1}-x'_{k+1})\left(\frac12 ( 2U_{k} +  x'_{k+1} + u_{k+1} ) - c_{k+1}\right).
\end{align*}
On the other hand, we have
\[
2U_{k} +  x'_{k+1} + u_{k+1} \leq 2U_{k} +  2u_{k+1} \leq 2U_{k} +  2\sum_{i=k+1}^{\bar n-2} u_i + u_{\bar n - 1} = U_{\bar n -2} + U_{\bar n -1}.
\]
Thus
\[
\frac12 ( 2U_{k} +  x'_{k+1} + u_{k+1} ) - c_{k+1} \leq \frac12 (U_{\bar n -2} + U_{\bar n -1}) - c_{\bar n -1} = G_{\bar n -1} < 0.
\]
That is $f(x^{(k+1)}) < f(x')$.
Thus in both cases there exist a point $x^{(t)}$, say,  such that $f(x^{(t)})\leq f(x') \leq f(x)$. Now, by  Lemma~\ref{lem:pre-simple-integer}, $f(x^{(\bar n-1)})\leq f(x^{(t)})\leq f(x)$ and the result follows by
 Theorem~\ref{lem:pre2-simple-integer}.

 \emph{Proof of }\eqref{thm:main-result-p2} Consider the possible values of $k$ at  the beginning of the proof of part  \eqref{thm:main-result-p1}. Here, we just have $k\geq \bar n=1$. Now, similar argument for this case proves \eqref{thm:main-result-p2}.

 \emph{Proof of }\eqref{thm:main-result-p3} Again consider the possible values of $k$ at  the beginning of the proof of part  \eqref{thm:main-result-p1}.  Similar argument with case $k<\bar n-2$ with $\bar n=n+1$ proves part \eqref{thm:main-result-p3}.

\end{proof}

One can conclude the following result on the time needed to solve problem \eqref{eqn:r1-QKP-unres}.
\begin{thm}
  There exists an $O(n\log n)$ time algorithm for problem \eqref{eqn:r1-QKP-unres}.
\end{thm}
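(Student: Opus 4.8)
The plan is to assemble an explicit algorithm from the structural results already established, and then account for its running time. By Theorem~\ref{thm:main-result}, the optimal value of \eqref{eqn:r1-QKP-unres} is determined entirely by the quantities $\bar n$, $\bar x$, and $\tilde x$ (or by the degenerate case in part~(\ref{thm:main-result-p3})), each of which is defined in terms of the sorted coefficients $c_i$, the prefix sums $U_k$, and the breakpoint quantities $G_k$. So the algorithm has a transparent skeleton: sort, compute prefix sums, locate $\bar n$, and evaluate a constant number of candidate points.

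First I would perform the preprocessing that the statement of Section~\ref{sec:bnd-ver} assumes ``without loss of generality'': sort the indices so that $c_1\geq c_2\geq\cdots\geq c_n\geq 0$. This is the only genuinely superlinear step, and it costs $O(n\log n)$ using any comparison-based sort. Next I would compute all prefix sums $U_k=\sum_{i=1}^k u_i$ in a single left-to-right pass, which is $O(n)$; from these I can evaluate each $G_k=U_{k-1}+\tfrac12 u_k-c_k$ in constant time, so the full array $(G_k)_{k=1}^n$ is available in $O(n)$.

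Then I would scan for $\bar n$, the smallest index with $G_{\bar n}\geq 0$. Since Lemma~\ref{lem:pre-simple-integer} shows the $G_k$ are strictly increasing, this index is well defined and can be found by a single linear scan (or, if one wishes, by binary search, but a linear scan already fits the budget). If no such index exists, i.e.\ $G_k<0$ for all $k$, I fall into Theorem~\ref{thm:main-result}(\ref{thm:main-result-p3}) and return $f(\tilde x)$ with $\tilde x=x^{(n-1)}+\delta' e_n$. Otherwise, depending on whether $\bar n=1$ or $\bar n>1$, I invoke Theorem~\ref{thm:main-result}(\ref{thm:main-result-p2}) or~(\ref{thm:main-result-p1}): I form the vectors $\bar x$ and $\tilde x$ from their defining formulas, evaluate $f$ at each, and return the minimum. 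Constructing these vectors and evaluating $f(x)=\tfrac12(\bone^\top x)^2-c^\top x$ is $O(n)$ per candidate, and there are only a bounded number of candidates.

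The correctness of the procedure is immediate from Theorem~\ref{thm:main-result}, so no new mathematical argument is required; the content of the statement is purely the complexity bookkeeping. Summing the costs gives $O(n\log n)+O(n)=O(n\log n)$. The only step I expect to demand any care is confirming that the sort is the unique bottleneck and that every other operation---prefix sums, the scan for $\bar n$, the formation of $\bar x,\tilde x$, and the two evaluations of $f$---is linear or constant, so that nothing silently inflates the bound. Once that accounting is checked, the $O(n\log n)$ claim follows.
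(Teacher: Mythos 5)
Your proposal is correct and follows essentially the same route as the paper: sort $c$ in $O(n\log n)$, compute the prefix sums $U_k$ and the array $G$ in $O(n)$, locate $\bar n$ using the monotonicity of the $G_k$ (the paper uses binary search, you note a linear scan also suffices), and then read off the optimal solution from Theorem~\ref{thm:main-result} in $O(n)$. Your write-up is in fact somewhat more explicit than the paper's about where correctness comes from and why the sort is the bottleneck, but the argument is the same.
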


\begin{proof}

Once the index $\bar n$ is determined the solution can be determined in $O(n)$ time.
We need $O(n\log n)$ to sort vector $c$, $O(n)$ to compute vector $G$, and $O(\log n)$ to find index $\bar n$.
That is, problem \eqref{eqn:r1-QKP-unres} can be solved in $O(n\log n)$.
\end{proof}

%
%
%

\section{The algorithm}
\label{sec:the-algorithm}

Our algorithm consists of two phases: bounding the optimal solution, and, computing the optimal solution to the desired precision.
The bounding phase is based on the Lagrangian dual of \eqref{eqn:r1-qkp-red} and the solution structure of the bounded version described in section~\ref{sec:bnd-ver}.

\subsection{Lagrangian dual}\label{sec:lag-rel}
Let $\lambda$ be the Lagrange multiplier of equality constraint in \eqref{eqn:r1-qkp-red}. Then, the Lagrangian function is defined as
\begin{equation}\label{eqn:lag-dual}
  \begin{aligned}
\phi(\lambda) & = \min\left\{ \frac12 (\bone^\top x)^2 - c^\top x + \lambda(b - a^\top x) \;:\; 0\leq x \leq u \right\} \\
& = \lambda b + \min\left\{ \frac12 (\bone^\top x)^2 - (c + \lambda a)^\top x  \;:\; 0\leq x \leq u \right\}
\end{aligned}
\end{equation}

We have the following fact about the structure of the Lagrangian function $\phi$.

\begin{thm}
Given Lagrange multiplier $\lambda$, define $\bar n$ as in Theorem~\ref{thm:main-result}. If $\bar n>1$, then
we have
\begin{align}
\phi(\lambda) &  =  \lambda b + f_\lambda(x^{(\bar n -1)}),  \text{if } c_{\bar n}(\lambda) \leq U_{\bar n -1} \leq  c_{\bar n-1}(\lambda), \tag{Type A}\label{eqn:phi-type-I} \\
\phi(\lambda) &  =  \lambda b +  p_{\bar n}(\lambda),  \text{if } U_{\bar n -1} \leq    c_{\bar n}(\lambda), \tag{Type B}\label{eqn:phi-type-II}\\
\phi(\lambda) &  =  \lambda b +   p_{\bar n-1}(\lambda),  \text{if } U_{\bar n -1} \geq   c_{\bar n -1}(\lambda), \tag{Type C}\label{eqn:phi-type-III}
\end{align}
where $f_\lambda$ is the objective function of the optimization part of \eqref{eqn:lag-dual}, and,
\[
\begin{aligned}
p_{k}(\lambda) & = -\frac12 a_{k}^2 \lambda^2 - a^\top d_{k}\lambda + \frac12c_{k}^2 - c^\top d_{k},  \\
d_k & = x^{(k-1)} +(c_k-U_{k-1}) e_k.
\end{aligned}
\]

\end{thm}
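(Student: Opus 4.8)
The plan is to reduce the statement to the already-established structure of the bounded problem \eqref{eqn:r1-QKP-unres}. The minimization part of \eqref{eqn:lag-dual} is precisely problem \eqref{eqn:r1-QKP-unres} with the cost vector $c$ replaced by the \emph{effective} cost $c(\lambda) = c + \lambda a$, whose $i$th component I write as $c_i(\lambda) = c_i + \lambda a_i$. After re-sorting the $c_i(\lambda)$ in non-increasing order (so that the sort, and hence $\bar n$ and the vectors $x^{(k)}$, now depend on $\lambda$), Theorem~\ref{thm:main-result}(\ref{thm:main-result-p1}) applies and gives, for $\bar n>1$,
\[
\phi(\lambda) = \lambda b + \min\{ f_\lambda(\bar x),\, f_\lambda(\tilde x)\},
\]
with $\bar x = x^{(\bar n-2)} + \delta_1 e_{\bar n-1}$ and $\tilde x = x^{(\bar n-1)} + \delta_2 e_{\bar n}$ determined by $\delta_1 = \min\{c_{\bar n-1}(\lambda) - U_{\bar n-2},\, u_{\bar n-1}\}$ and $\delta_2 = \max\{c_{\bar n}(\lambda) - U_{\bar n-1},\, 0\}$.

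The key computational step is an explicit evaluation of $f_\lambda$ at the candidate corners. First I would establish the identity that for the point $y_k = x^{(k-1)} + (c_k(\lambda) - U_{k-1}) e_k$ one has $f_\lambda(y_k) = p_k(\lambda)$. This is a direct expansion: since $\bone^\top y_k = c_k(\lambda)$, one computes $f_\lambda(y_k) = -\tfrac12 c_k(\lambda)^2 + U_{k-1} c_k(\lambda) - \sum_{i=1}^{k-1} u_i c_i(\lambda)$, and then substitutes $c_i(\lambda) = c_i + \lambda a_i$. Grouping by powers of $\lambda$, the $\lambda^2$ coefficient is $-\tfrac12 a_k^2$, the $\lambda$ coefficient collapses to $-a^\top d_k$, and the constant to $\tfrac12 c_k^2 - c^\top d_k$, which are exactly the coefficients of $p_k(\lambda)$. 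This identity is the crux, and it is where the apparently surprising appearance of the \emph{original} $c_k$ (not $c_k(\lambda)$) inside $d_k$ is reconciled, the cross terms in $\lambda$ absorbing the difference.

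It then remains to analyse the three cases, in each using the sorted-order inequality $c_{\bar n-1}(\lambda) \ge c_{\bar n}(\lambda)$ to pin down $\delta_1,\delta_2$. In Type~I, $c_{\bar n-1}(\lambda) \ge U_{\bar n-1}$ forces $\delta_1 = u_{\bar n-1}$ and $c_{\bar n}(\lambda) \le U_{\bar n-1}$ forces $\delta_2 = 0$, so $\bar x = \tilde x = x^{(\bar n-1)}$ and $\phi(\lambda) = \lambda b + f_\lambda(x^{(\bar n-1)})$. In Type~II, $U_{\bar n-1} \le c_{\bar n}(\lambda)$ gives $\delta_2 = c_{\bar n}(\lambda) - U_{\bar n-1}$, whence $\tilde x = y_{\bar n}$ and $f_\lambda(\tilde x) = p_{\bar n}(\lambda)$; moreover the sorted order yields $c_{\bar n-1}(\lambda) \ge c_{\bar n}(\lambda) \ge U_{\bar n-1}$, so $\delta_1 = u_{\bar n-1}$ and Corollary~\ref{cor:optimality-special-case}(\ref{cor:cor-of-main-lem-1}) selects $f_\lambda(\tilde x)$ as the minimum. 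Type~III is symmetric: $U_{\bar n-1} \ge c_{\bar n-1}(\lambda)$ gives $\delta_1 = c_{\bar n-1}(\lambda) - U_{\bar n-2}$ (positive because the defining inequality $G_{\bar n-1} < 0$ for the effective costs yields $c_{\bar n-1}(\lambda) > U_{\bar n-2}$), so $\bar x = y_{\bar n-1}$ and $f_\lambda(\bar x) = p_{\bar n-1}(\lambda)$, while $c_{\bar n}(\lambda) \le c_{\bar n-1}(\lambda) \le U_{\bar n-1}$ forces $\delta_2 = 0$ and Corollary~\ref{cor:optimality-special-case}(\ref{cor:cor-of-main-lem-2}) selects $f_\lambda(\bar x)$.

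I expect the main obstacle to be the bookkeeping in the identity $f_\lambda(y_k) = p_k(\lambda)$ rather than anything conceptual; the only genuine subtlety is that the ordering of the effective costs, and therefore $\bar n$ and the vectors $x^{(k)}$, vary with $\lambda$, so Theorem~\ref{thm:main-result} and Corollary~\ref{cor:optimality-special-case} must be invoked for the $\lambda$-dependent cost vector $c(\lambda)$ at each fixed $\lambda$.
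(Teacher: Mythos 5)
Your proposal is correct and follows essentially the same route as the paper, which simply states that the proof is a case analysis on the possible values of $\delta_1$ and $\delta_2$ from Theorem~\ref{thm:main-result} and omits the computations; you have carried out exactly that case analysis, supplied the omitted verification $f_\lambda(y_k)=p_k(\lambda)$, and correctly handled the one subtlety (the sort order, $\bar n$, and the $x^{(k)}$ all depend on $\lambda$).
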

\begin{proof}
  The proof is based on the four possible cases for $\delta_1$ and $\delta_2$ in Theorem~\ref{thm:main-result}.
   We just prove \eqref{eqn:phi-type-I} and, for brevity, we omit the remaining parts.

   Suppose that $c_{\bar n}(\lambda) \leq U_{\bar n -1} \leq  c_{\bar n-1}(\lambda)$. Then we have $c_{\bar n-1}(\lambda) - U_{\bar n-2} \geq u_{\bar n-1}$ and $c_{\bar n}(\lambda) - U_{\bar n-1} \leq 0$. Therefor the value of $\delta_1$ and $\delta_2$ in Theorem~\ref{thm:main-result} can be determined as
   \[
   \delta_1 = \min\{ c_{\bar n-1}(\lambda) - U_{\bar n-2}, u_{\bar n-1}\} = u_{\bar n-1}, \qquad \delta_2 = \max\{c_{\bar n}(\lambda) - U_{\bar n-1} ,0 \}=c_{\bar n}(\lambda) - U_{\bar n-1}.
   \]
   Thus we have $\bar x = x^{(\bar n -2)} + \delta_1 e_{\bar n-1} = x^{(\bar n-1)}$.
   Thus, by some simplifications we have $f_\lambda(\tilde x) -  f_\lambda(\bar x)  = \frac12(c_{\bar n}(\lambda)-U_{\bar n-1})^2 \geq 0$. Now, by Theorem~\ref{thm:main-result}, we have $\min\{ f_\lambda(x)\;:\; 0\leq x\leq u\} = \min\{f(\tilde x),f(\bar x\} = f_\lambda(\bar x) = f_\lambda(x^{(\bar n-1)})$.
\end{proof}
Now, one may conclude that if $\bar n$ is fixed on an interval $[\lambda_a,\lambda_b]$, then $\phi(\lambda)$ is a piecewise function that contains exactly $3$ pieces.
However, the following simple example shows that this is not true.
\begin{exm}\label{exm:phi-bad-behavior}
Consider problem \eqref{eqn:r1-qkp-red} with the following parameters
\[
\begin{aligned}
& a^\top = \begin{bmatrix}
           -7 & -5 & 7 & -5 & 7
         \end{bmatrix}, \quad
c^\top = \begin{bmatrix}
           54  &  44  &  15  &  -8 &  -70
         \end{bmatrix},\\
& u^\top = \begin{bmatrix}
           62  &  48   & 36  &  84  &  59
         \end{bmatrix}.
\end{aligned}
\]
In Figure~\ref{fig:bad-behave-of-phi} we plot $\phi(\lambda)$ for $\lambda\in [-8.36,7.00]$. We distinct three cases in \eqref{eqn:phi-type-I}, \eqref{eqn:phi-type-II} and \eqref{eqn:phi-type-III} with three colors blue, red and green, respectively.
As it can be seen in the figure, $\phi(\lambda)$ consists of $4$ pieces.

  \begin{figure}
  \centering
    \includegraphics[width=8cm]{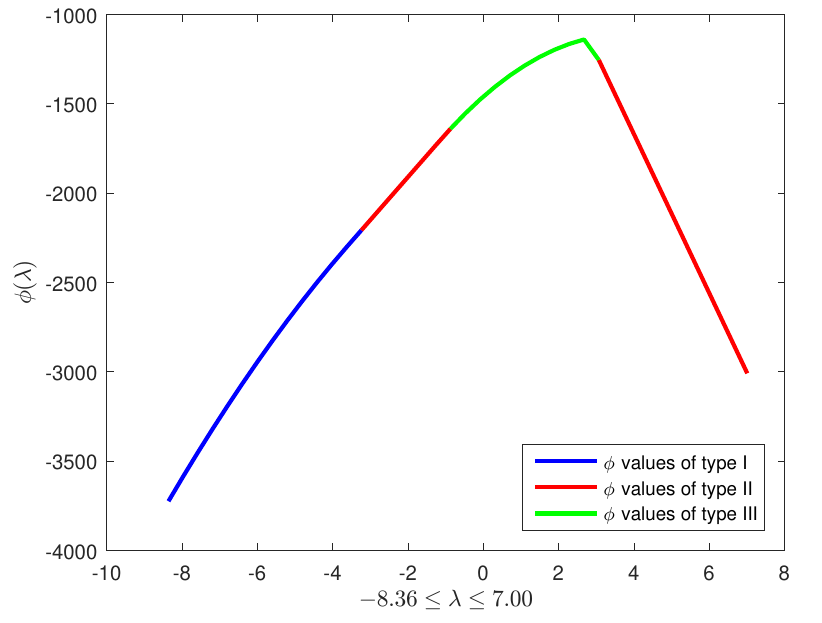}
    \caption{Plot of $\phi(\lambda)$   for the problem of Example~\ref{exm:phi-bad-behavior}.}
    \label{fig:bad-behave-of-phi}
  \end{figure}
\end{exm}
Inner optimization problem in \eqref{eqn:lag-dual} is a special case of problem \eqref{eqn:r1-QKP-unres}, that can be solved by Theorem~\ref{thm:main-result}.
In Theorem~\ref{thm:main-result} it is assumed that coefficients of the linear term  in the objective function is sorted in decreasing order.
In problem  \eqref{eqn:lag-dual}, the order of coefficients of the linear term  depends on $\lambda$.
From now on, we denote by $c_i(\lambda)$ the coefficient of $x_i$, i.e. $c_i(\lambda) = c_i + \lambda a_i$.
Moreover, we denote the line $\{c_i(\lambda)\;:\; \lambda\in\RR\}$ by $\ell_i$.
It is easy to see that, when  $\lambda$ becomes  greater than the intersection of $\ell_i$ and $\ell_j$, $c_i(\lambda)$ and $c_j(\lambda)$ change position in the ordered list  of coefficients.

We use a modification of the well-known plane sweep algorithm to find the ordered intersection points of lines $\{\ell_i \;:\; i=1,\cdots, n\}$.
Now, let $\lambda_a$, $\lambda'$ and $\lambda_c$ are three consecutive intersection points, then because of the Lagrangian function is unimodal, the optimal Lagrange multiplier $\lambda^\ast$ lies in the interval $[\lambda_a,\lambda_b]$ if  $\phi(\lambda') > \phi(\lambda_a)$ and $\phi(\lambda') > \phi(\lambda_b)$.

We modify the implementation of the line-sweep algorithm proposed in \cite{berg2008computational}.
In this algorithm, a vertical line $\ell$ sweeps the plane from left to right.
The \emph{status} of the sweep line is the ordered sequence of lines that intersect it.
The status initially contains all lines in the order of decreasing slope, that is the order of lines when they intersect with sweep line at $\lambda=-\infty$.
The status is updated when $\ell$ reaches an intersecting point.
For example, suppose that the sequence of four lines $\ell_l$, $\ell_i$, $\ell_j$, and $\ell_m$ appears in the status when $\ell$ reaches the intersection point of $\ell_i$ and $\ell_j$.
Then, $\ell_i$ and $\ell_j$ switch position and the intersection of lines $\ell_i$ and $\ell_m$ and the intersection of $\ell_j$ and $\ell_l$ are to be checked.
The new detected intersection points are stored to proceed.
 The order of cost coefficient of the linear term in $\phi(\lambda)$  is unchanged between two consecutive intersection points.

If $c_i(\lambda) < 0$ for some $i$, then $x_i=0$ in the optimal solution of the $\phi(\lambda)$ subproblem.
We introduce a set $Z$ to store the non-vanished variables.
To do so we add a dummy line $\ell_0 : c_0(\lambda) = 0$.
In each intersection of the dummy line and the other lines, the set $Z$ should be updated.
In fact, if $\ell_i$ intersect $\ell_0$ and $i\not\in Z$, then we add $i$ to $Z$, otherwise, if $i\in Z$, then it should be removed from $Z$.
In other words, since we sweep the plane from left to right, if $\ell_i$ intersect $\ell_0$ and $a_i<0$, then we add $i$ to $Z$.
If $\ell_i$ intersect $\ell_0$ and $a_i>0$ then it means $i$ should be removed from $Z$.
$Z$ initially contains the set of all lines with a positive slope.
With this modification, we guarantee that between two consecutive intersection points the set of zero-valued variables is unchanged.
It should be noted here that lines with equal slopes are sorted based on increasing order of $c_i$s.
We summarize the algorithm in the following Algorithm \ref{alg:solving-R1QKP}. This algorithm is used as the first phase in the main algorithm.

\begin{algorithm}
\caption{A plane sweep algorithm for finding an interval containing the optimal solution of the Lagrangian dual problem.}\label{alg:solving-R1QKP}
\begin{algorithmic}[1]
\STATE{\textbf{Input}: vectors $c$, $a$ and $u$ and scaler $b$.}
\STATE{\textbf{Output}: interval $[\lambda_a,\lambda_b]$ that contains the optimal solution of problem $\max_{\lambda \in\RR} \phi(\lambda)$ or the smallest and largest intersection points.}
\STATE{Initialize state array, $\ell=[1,\cdots,n]$ , with lines $\ell[1],\cdots,\ell[n]$ sorted in decreasing order of slope.}\label{alg:solving-R1QKP-init-start}
\STATE{Initialize queue $Q = \emptyset$.}
\STATE{Initialize line indices array $p = [1,\cdots, n]$.}
\STATE{\textsc{Fail} $\leftarrow$ \verb|true| }
\STATE{Insert intersection points  of all adjacent lines into $Q$.}\label{alg:solving-R1QKP-init-end}
\STATE{Set $\lambda^{\prev} \leftarrow -\infty$, $\lambda^{\prev\prev} \leftarrow -\infty$}
\WHILE{$Q$ is not empty}
\STATE{Pop from $Q$ the current intersection point $\lambda^{\new}$ and the corresponding two adjacent lines $\ell[i]$ and $\ell[j]$.}
\STATE{Update state array: $\ell[p[i]] \leftrightarrow \ell[p[j]]$.}\label{alg:solving-R1QKP-updating-ell}
\STATE{Update the line indices array: $p[i]\leftrightarrow p[j]$.}
\STATE{Insert the intersection point of $\ell[p[i]]$ and $\ell[p[i]+1]$  and the intersection point of $\ell[p[j]]$ and $\ell[p[j]-1]$ into $Q$, if there exists any.}
\IF{$\phi(\lambda^{\prev})>\phi(\lambda^{\prev\prev})$ and $\phi(\lambda^{\prev})> \phi(\lambda^{\new})$ }\label{alg:solving-R1QKP-subproblem}
\STATE{Set \textsc{Fail} $\leftarrow$ \verb|false|}
\RETURN{$[\lambda^{\prev\prev},\lambda^{\new}]$ as the promissing interval.}
\ENDIF
\STATE{Set $\lambda^{\prev\prev} \leftarrow \lambda^{\prev}$.}
\STATE{Set $\lambda^{\prev} \leftarrow \lambda^{\new}$.}
\ENDWHILE
\IF{\textsc{Fail}}
\RETURN{the smallest $\lambda_{LB}$ and the largest $\lambda_{UB}$ interseation points.}
\ENDIF
\end{algorithmic}
\end{algorithm}

\begin{thm}\label{thm:algorithm1-complexity}
  Algorithm~\ref{alg:solving-R1QKP} runs in  $O(n^2\log n)$ time.
\end{thm}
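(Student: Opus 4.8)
The plan is to multiply a bound on the number of events processed by the sweep by the cost of handling a single event. For the event count, I would observe that every iteration of the main loop pops exactly one intersection point and no point is processed twice; since any two of the $n$ lines $\ell_i$ (together with the dummy line $\ell_0$) meet in at most one point, there are at most $\binom{n+1}{2} = O(n^2)$ intersection points, so the loop runs $O(n^2)$ times. This coarse line-arrangement bound is the source of the factor $n^2$.

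For the cost of one iteration, I would separate the bookkeeping from the evaluation of $\phi$. The two transpositions updating the status array $\ell$ and the index array $p$ on lines~\ref{alg:solving-R1QKP-updating-ell} and following are $O(1)$. The queue $Q$ is a binary heap that stores intersection points of currently adjacent lines, hence $O(n)$ elements (and $O(n^2)$ even in the crudest bound); a pop and the insertions therefore each cost $O(\log n)$, since $\log(n^2) = O(\log n)$. The only remaining work is the unimodality test on line~\ref{alg:solving-R1QKP-subproblem}, and since $\phi(\lambda^{\prev})$ and $\phi(\lambda^{\prev\prev})$ are inherited from earlier iterations, only the single value $\phi(\lambda^{\new})$ must be computed afresh at each event.

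The crux is therefore to evaluate $\phi(\lambda^{\new})$ in $O(\log n)$ time rather than by an $O(n\log n)$ solve from scratch. Here I would exploit that between consecutive events the sorted order of the coefficients $c_i(\lambda)$ is constant and changes at an event by a single adjacent transposition, so that the bounds $u_i$ in sorted order, the set $Z$ of non-vanished variables, and the prefix sums $U_k$ all change at only one position and can be maintained incrementally across events. Given the current order and prefix sums, the index $\bar n$ of Theorem~\ref{thm:main-result} is located by binary search, using the monotonicity $G_1 < \cdots < G_n$ established inside the proof of Lemma~\ref{lem:pre-simple-integer}, and $\phi(\lambda^{\new})$ is then read off from the closed-form expressions of Theorem~\ref{thm:main-result} in $O(1)$ time. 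The main obstacle is precisely this step: one must verify that the incremental maintenance of the sorted order and prefix sums keeps each evaluation (and the binary search for $\bar n$) within $O(\log n)$. Granting it, the total cost is $O(n^2)\cdot O(\log n) = O(n^2\log n)$, as claimed.
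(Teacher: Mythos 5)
Your proposal is correct and follows essentially the same route as the paper's own proof: $O(n^2)$ intersection events, each handled in $O(\log n)$ time via $O(1)$ incremental updates to the coefficient order and the vector $G$, a binary search for $\bar n$ exploiting the monotonicity of the $G_k$, and $O(\log n)$ heap operations. If anything, you are more explicit than the paper about the one step it asserts without detail — that the sorted order, prefix sums, and $Z$ can be maintained across an adjacent transposition in constant time so that each evaluation of $\phi(\lambda^{\new})$ stays within $O(\log n)$.
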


\begin{proof}
  Initializing state array  $\ell$, line indices array $p$ and the queue $Q$ in steps~\ref{alg:solving-R1QKP-init-start}--\ref{alg:solving-R1QKP-init-end} needs $O(n\log n)$ time.
  In each iteration we perform two main operations: computing the value of $\phi$ for a new intersect point $\lambda^{\new}$ and updating $Q$.
  The order of $c_i(\lambda^{\new})$ and the vector $G$  can be updated from the previous intersection point in $O(1)$ time.
  Finding $\bar n$ needs $O(\log n)$, using binary search.
  On the other hand, insertion and deletion on the priority queue  $Q$ takes $O(\log n)$ since one can implement the priority queue by a heap to store the intersection points.
  Therefore each iteration of the main loop needs $O(\log n)$ time. Since there are $O(n^2)$ intersection points, the algorithm runs in $O(n^2\log n)$.
\end{proof}

Let $\lambda_{LB}$ and $\lambda_{UB}$ be the smallest and greatest intersection points of lines $\{ \ell_i\;:\; i=1,\cdots, n\}$.
The optimal solution of the Lagrangian problem may lie out of the interval $[\lambda_{LB},\lambda_{UB}]$.
In this case, Algorithm~\ref{alg:solving-R1QKP} fails to find the optimal interval.
So, we explore the outside of  $[\lambda_{LB},\lambda_{UB}]$ in a separate phase.

First consider the exploration of $(-\infty, \lambda_{LB})$.
Since the components of vector $G$ in Theorem~\ref{thm:main-result} are linear functions in term of $\lambda$, then there exists $\lambda'_{LB}<\lambda_{LB}$  such that the order of $G_k$s does not change for $\lambda<\lambda'_{LB}$.
Indeed, a similar procedure for finding the smallest intersection of lines $\ell_i$'s can be used here to compute $\lambda'_{LB}$.
Now, since $\phi(\lambda)$ is unimodal one can conclude that
\begin{equation}\label{eqn:phaseII-prob}
  \max_{(-\infty, \lambda_{LB}]} \phi(\lambda) = \max_{[\lambda'_{LB}, \lambda_{LB}]} \phi(\lambda).
\end{equation}
Similarly, for the values of $\lambda$ greater than $\lambda_{UB}$, one can find a threshold, say $\lambda'_{UB}$, such that
\begin{equation}\label{eqn:phaseIII-prob}
\max_{[\lambda_{UB},\infty)} \phi(\lambda) = \max_{[\lambda_{UB}, \lambda'_{UB}]} \phi(\lambda).
\end{equation}
We summarize the whole algorithm in Algorithm~\ref{alg:solving-general-R1QKP}.

\begin{algorithm}
\caption{A two-phase algorithm for solving rank-one quadratic knapsack problem \eqref{eqn:r1-qkp-red}.}\label{alg:solving-general-R1QKP}
\begin{algorithmic}[1]
\STATE{Run Algorithm~\ref{alg:solving-R1QKP} to find a promising interval that contains the optimal Lagrange multiplier.}
\IF{  Algorithm~\ref{alg:solving-R1QKP} returns an interval $[\lambda_a,\lambda_b]$ }
\STATE{
    Solve optimization problem $\max_{[\lambda_a, \lambda_b]} \phi(\lambda)$ and return the optimal solution.
    }\label{alg:step5}
\ELSE
\STATE{Solve  optimization problems \eqref{eqn:phaseII-prob} and \eqref{eqn:phaseIII-prob} and store the optimal values.}\label{alg:solving-general-R1QKP-phaseII}
\ENDIF
\RETURN{the best $\lambda$ found as an optimal Lagrange multiplier.}
\end{algorithmic}
\end{algorithm}
It is clear that Algorithm~\ref{alg:solving-general-R1QKP} converges to the optimal solution, since the output interval of Algorithm~\ref{alg:solving-R1QKP} contains the optimal solution and $\phi(\lambda)$ is unimodal. In fact, the single variable optimization problem in step~\ref{alg:step5} can be solved efficiently by a classical root-finding algorithms.

\section{Computational Experiments}\label{sec:comp-test}
In this section, we compare the running time and the numerical accuracy of Algorithm~\ref{alg:solving-general-R1QKP} with the general convex quadratic programming solver, CPLEX.
We implement Algorithm~\ref{alg:solving-general-R1QKP} with  MATLAB R2019b. All runs are performed on a system with Core i7 $2.00$ GHz CPU and $8.00$ GB of RAM equipped with a $64$bit Windows operating system.
We solve single variables optimization problems \eqref{eqn:phaseIII-prob}, \eqref{eqn:phaseII-prob} and step~\ref{alg:step5} in Algorithm~\ref{alg:solving-general-R1QKP}, using MATLAB built-in function \textsf{fminbnd} which is based on  golden section search and parabolic interpolation.

Our testbed contains two types of randomly generated rank-one knapsack problems up to $n=50,000$ variables.
In the first type, vectors $a$ and $c$ are integral and generated uniformly from the same interval. We denote this type by \textsf{TypeI}.
In the second type (\textsf{TypeII}),  vectors $a$ and $c$ are positive and negative randomly generated integral vectors, respectively.
In Table~\ref{tbl:param} we summarize the parameter values for problem instances.

\setlength{\heavyrulewidth}{0.1em}
\setlength{\extrarowheight}{-1em}

\begin{table}
\caption{Parameters for two types of problem instances.}\label{tbl:param}
\centering
{
\begin{tabular}{l*{4}{l}}\toprule
Type  &  $a$  & $c$ & $l$ & $u-l$  \ML
\textsf{TypeI}	& $U(-50,50)$	& $U(-50,50)$	& $U(0,20)$	& $U(1,100)$\\
\textsf{TypeII}	& $U(-100,10)$	& $U(10,100)$	& $U(0,20)$	& $U(1,100)$\LL
\end{tabular}
}
\end{table}

As a well-known general convex quadratic programming solver, we chose CPLEX (ver. 12.9)  to compare with our results.
Based on our numerical tests,  we set the quadratic programming solver of CPLEX (\textsf{qpmethod} option) to \textsf{barrier}. The \textsf{barrier} convergence tolerance, \textsf{convergetol} is set to $1e-12$.
After we complete our experimental tests, we found in \cite{cplextrick} that the sparsity of the Hessian matrix influences the performance of CPLEX. To increase the performance, we reformulate our problem as
\[
\min\left\{ \frac{1}{2} y^2 - c^\top x \;:\; \bone^\top x - y =0, \; a^\top x = b,\; 0\leq x\leq u\right\}.
\]
We denote the results corresponding to running CPLEX on the original problem and the above modification respectively by $\text{CPLEX}_{org}$ and $\text{CPLEX}_{ref}$.

Table~\ref{tbl:result} shows the average running time for $5$ runs of each algorithm/solver for each problem size. Dash sign, '-', denoted the algorithm/solver encounters out-of-memory status.

In all cases, Algorithm~\ref{alg:solving-general-R1QKP} outperforms $\text{CPLEX}_{org}$. For instances up to $n=5000$, our algorithm and $\text{CPLEX}_{ref}$ has same performance, whereas for larger instances, $\text{CPLEX}_{ref}$ has smaller running time.

\begin{table}
\caption{A comparison of running times (in seconds) between our algorithm and $\text{CPLEX}_{org}$ and $\text{CPLEX}_{ref}$.}\label{tbl:result}
\centering
{
\begin{tabular}{l*{5}{c}}\toprule
$n$   & &  Our algorithm  & $\text{CPLEX}_{org}$ & $\text{CPLEX}_{ref}$ \ML
$1000$ & 	 \textsf{TypeI} & 	 $0.06$ & 	 $0.09 $ & 	 $0.01$ \\
       & 	 \textsf{TypeII} & 	 $0.01$ & 	 $0.06 $ & 	 $0.02$ \\
$1500$ & 	 \textsf{TypeI} & 	 $0.04$ & 	 $0.15 $ & 	 $0.02$ \\
       & 	 \textsf{TypeII} & 	 $0.02$ & 	 $0.13 $ & 	 $0.02$ \\
$2000$ & 	 \textsf{TypeI} & 	 $0.04$ & 	 $0.27 $ & 	 $0.02$ \\
       & 	 \textsf{TypeII} & 	 $0.02$ & 	 $0.27 $ & 	 $0.02$ \\
$5000$ & 	 \textsf{TypeI} & 	 $0.09$ & 	 $2.21 $ & 	 $0.02$ \\
       & 	 \textsf{TypeII} & 	 $0.06$ & 	 $2.12 $ & 	 $0.03$ \\
$10000$ & 	 \textsf{TypeI} & 	 $0.26$ & 	 $16.26 $ & 	 $0.04$ \\
       & 	 \textsf{TypeII} & 	 $0.23$ & 	 $16.95 $ & 	 $0.05$ \\
$15000$ & 	 \textsf{TypeI} & 	 $0.62$ & 	 $61.20 $ & 	 $0.10$ \\
       & 	 \textsf{TypeII} & 	 $0.63$ & 	 $65.88 $ & 	 $0.10$ \\
$20000$ & 	 \textsf{TypeI} & 	 $1.16$ & 	 - & 	 $1.20$ \\
       & 	 \textsf{TypeII} & 	 $0.88$ & 	 - & 	 $1.02$ \\
$50000$ & 	 \textsf{TypeI} & 	 $3.22$ & 	 - & 	 $0.11$ \\
       & 	 \textsf{TypeII} & 	 $3.19$ & 	 - & 	 $0.11$ \\
$100000$ & 	 \textsf{TypeI} & 	 $12.19$ & 	 - & 	 $0.14$ \\
        & 	 \textsf{TypeII} & 	 $11.31$ & 	 - & 	 $0.17$ \ML
\end{tabular}
}
\end{table}

%

\section{Conclusions}
In this paper, we proposed a two-phase algorithm for rank-one quadratic knapsack problems. To this end, we studied the solution structure of the problem when it has no resource constraint.
Indeed, we proposed an $O(n\log n)$ algorithm to solve this problem.
We then used the solution structure to propose an $O(n^2\log n)$ line-sweep algorithm that finds an interval that contains the optimal Lagrange multiplier.
Then, the estimated optimal interval is explored for computing the optimal solution with the desired accuracy.
Our computational tests showed that our algorithm has better performance than CPLEX when CPLEX is used to solve the original problem. After a reformulation of the problem, CPLEX outperforms our algorithm for instances with $n\geq 5000$.

\clearpage
%

\end{document}